\def\Cal{\mathcal}
\let\La\Lambda
\let\o\circ
\newcommand{\im}{\operatorname{im}}
\newcommand{\Rho}{P}
\newcommand{\ce}{{\Cal E}}
\newcommand{\cT}{{\mathcal T}}
\newcommand{\rpl}                         
{\mbox{$
\begin{picture}(12.7,8)(-.5,-1)
\put(0,0.2){$+$}
\put(4.2,2.8){\oval(8,8)[r]}
\end{picture}$}}
\def\cV{\mathcal{V}}
\def\al{\alpha}
\newcommand{\si}{\sigma}
\newcommand{\End}{\operatorname{End}}
\newcommand{\lpl}{
  \mbox{$
  \begin{picture}(12.7,8)(-.5,-1)
  \put(2,0.2){$+$}
  \put(6.2,2.8){\oval(8,8)[l]}
  \end{picture}$}}
\newtheorem{theorem}{Theorem}[section]
\newtheorem{lemma}[theorem]{Lemma}
\newtheorem{proposition}[theorem]{Proposition}
\newtheorem{corollary}[theorem]{Corollary}
\theoremstyle{definition}
\newtheorem{definition}[theorem]{Definition}
\theoremstyle{remark}
\newtheorem{remark}[theorem]{\rm\bf Remark}
\newtheorem*{definition*}{\rm\bf Definition}
\newcommand{\nn}[1]{(\ref{#1})}
\newcommand{\vol}{\large\boldsymbol{\epsilon}}
\def\sideremark#1{\ifvmode\leavevmode\fi\vadjust{\vbox to0pt{\vss
 \hbox to 0pt{\hskip\hsize\hskip1em
 \vbox{\hsize3cm\tiny\raggedright\pretolerance10000
  \noindent #1\hfill}\hss}\vbox to8pt{\vfil}\vss}}}%
\author{A.\ \v Cap, A.\ R. Gover \& H.\ R.\ Macbeth}
\title{Einstein metrics in projective geometry}
\begin{document}

\address{A.\v C.: Faculty of Mathematics\\
University of Vienna\\
Nordbergstr. 15\\
1090 Wien\\
Austria\\
A.R.G.: Department of Mathematics\\
  The University of Auckland\\
  Private Bag 92019\\
  Auckland 1142\\
  New Zealand;\\
Mathematical Sciences Institute\\
Australian National University \\ ACT 0200, Australia\\
H.R.M.: Department of Mathematics\\
Princeton University\\
Princeton, NJ 08544\\
USA}
\email{Andreas.Cap@univie.ac.at}
\email{r.gover@auckland.ac.nz}
\email{macbeth@math.princeton.edu}

\subjclass[2000]{Primary 53B10, 53A20, 53C29; Secondary 35Q76, 53A30}
\keywords{projective differential geometry, Einstein metrics,
  conformal differential geometry}

\begin{abstract}
  It is well known that pseudo--Riemannian metrics in the projective
  class of a given torsion free affine connection can be obtained from
  (and are equivalent to) the solutions of a certain overdetermined
  projectively invariant differential equation. This equation is a
  special case of a so--called first BGG equation. The general theory
  of such equations singles out a subclass of so--called normal
  solutions. We prove that non-degenerate normal solutions are
  equivalent to pseudo--Riemannian Einstein metrics in the projective
  class and observe that this connects to natural projective extensions
  of the Einstein condition.
\end{abstract}

\maketitle \pagestyle{myheadings} \markboth{\v Cap, Gover \& Macbeth}{Normality}

\thanks{A\v C \& ARG gratefully acknowledge support from the Royal
  Society of New Zealand via Marsden Grant 10-UOA-113; A\v C
  gratefully acknowledges support by project P23244-N13 of the ``Fonds
  zur F\"orderung der wissenschaftlichen For\-schung'' (FWF); A\v C and
  HRM are grateful for the hospitality of the University of Auckland. }

\section{Introduction}\label{intro}

Suppose that $\nabla$ is a torsion-free connection on a manifold
$M^n$, $n\geq 2$ , and consider its geodesics as unparametrised
curves. The problem of whether these agree with the (unparametrised)
geodesics of a pseudo-Riemannian metric is the classical problem of
metrizability of projective structures which has attracted recent
interest \cite{BDE,EM,Liouville,Mikes,NurMet,Sinjukov}.

Recall that torsion--free connections $\nabla$ and $\widehat{\nabla}$
are said to be projectively equivalent if they have the same geodesics
as unparameterised curves.  A projective structure on a manifold $M$
(of dimension $n\geq 2$) is a projective equivalence class $p$ of
connections.

As is usual in projective geometry, we write $\ce(1)$ for a choice of
line bundle with $(-2n-2)$nd power the square of the canonical bundle
$\Lambda^n T^*M$.  Observe that any connection $\nabla \in p$
determines a connection on $\ce(1)$ as well as its real powers
$\ce(w)$, $w\in \mathbb{R}$; we call $\ce(w)$ the bundle of projective
densities of weight $w$. Given any bundle $\mathcal{B}$ we shall write
$\mathcal{B}(w)$ as a shorthand notation for $\mathcal{B}\otimes
\ce(w)$.

For simplicity here we suppose that $M$ is connected and orientable.
We say that a connection $\nabla$ is {\em special} if it preserves a
volume form $\epsilon$ on $M$.
On the other hand, suppose that
$\epsilon$ is a volume form on $M$ and $\tilde\nabla$ is any
connection on $TM$. Considering the induced connection on $\La^nT^*M$,
we can write $\nabla\epsilon$ as $\al\epsilon$ for some one--form
$\al\in\Omega^1(M)$. Then one easily verifies that 
$$
\nabla_\xi\eta:=\tilde\nabla_\xi\eta
+\tfrac1{n+1}\al(\xi)\eta+\tfrac1{n+1}\al(\eta)\xi 
$$ is a connection in the projective class of $\tilde\nabla$ for which
$\epsilon$ is parallel. 
Henceforth we use $p$ to denote the
equivalence class of projectively related special connections.  For
convenience we shall often use the Penrose abstract index notation and
write $\ce^{(bc)}$ for the symmetric tensor power of the tangent
bundle (otherwise written $ S^2 (TM)$) and $(\ce_a^{(bc)})_0$ for the
trace-free part of $T^*M\otimes S^2 (TM)$.

Consider the differential operator
$$
D_a: \ce^{(bc)}(-2) \to (\ce_a{}^{(bc)})_0(-2), \quad \mbox{given by
}\quad \si^{bc}\mapsto \operatorname{trace-free}\left( \nabla_a
  \si^{bc}\right).
$$ It is an easy exercise to verify that $D$ is a projectively
invariant differential operator in that it is independent of the
choice $\nabla\in p$.  Part of the importance of $D$ derives from the
following result due to Mikes and Sinjukov \cite{Mikes,Sinjukov}.
\begin{theorem}\label{EM1}
Suppose that $n\geq 2$ and $\nabla$ is a special torsion-free
connection on $M$. Then $\nabla$ is projectively equivalent to a
Levi-Civita connection if and only if there is a non-degenerate
solution $\si$ to the equation
\begin{equation}\label{metr}
D\si=0.
\end{equation}
\end{theorem}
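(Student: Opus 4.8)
The plan is to prove both implications by using non-degeneracy of $\sigma$ to pass between the density-valued field $\sigma^{bc}$ and a genuine pseudo-Riemannian metric, together with the projective invariance of $D$ noted above. For the ``if'' direction, suppose $\sigma\in\ce^{(bc)}(-2)$ is non-degenerate with $D\sigma=0$. Non-degeneracy provides a fibrewise inverse $(\sigma^{-1})_{ab}\in\ce_{(ab)}(2)$ and a nowhere-vanishing ``determinant'' $\det\sigma$, which has weight $2$ by a weight count (the $n$ factors of weight $-2$ against $\Lambda^nTM\otimes\Lambda^nTM\cong\ce(2n+2)$). Hence
$$
g_{ab}:=(\det\sigma)^{-1}(\sigma^{-1})_{ab}
$$
is a nowhere-degenerate section of $S^2T^*M$ of weight $0$, that is, a pseudo-Riemannian metric, and we will show $\nabla$ is projectively equivalent to its Levi-Civita connection. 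Evaluating $D\sigma=0$ with the given connection, the full covariant derivative of $\sigma$ is pure trace; contracting $a$ with $b$ (and using $n\ge 2$) this reads
$$
\nabla_a\sigma^{bc}=\delta_a^{(b}\mu^{c)},\qquad \mu^c:=\tfrac{2}{n+1}\nabla_a\sigma^{ac}\in\ce^c(-2).
$$

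Now pass to the connection $\hat\nabla$ projectively equivalent to $\nabla$ obtained by the projective change associated with the one-form $\Upsilon_a:=-\tfrac12(\sigma^{-1})_{ab}\mu^b$; the weights of $\sigma^{-1}$ and of $\mu$ cancel, so $\Upsilon\in\Omega^1(M)$ is an honest one-form. Because $\sigma^{bc}$ has weight exactly $-2$, the terms proportional to $\Upsilon_a\sigma^{bc}$ drop out of the transformation law, which reduces to
$$
\hat\nabla_a\sigma^{bc}=\nabla_a\sigma^{bc}+\delta_a^b\,\Upsilon_d\sigma^{dc}+\delta_a^c\,\Upsilon_d\sigma^{bd};
$$
since $\Upsilon_d\sigma^{dc}=-\tfrac12\mu^c$ by the choice of $\Upsilon$, substituting the previous display shows the right-hand side vanishes identically, i.e.\ $\hat\nabla\sigma=0$. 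From this and non-degeneracy one gets $\hat\nabla(\sigma^{-1})=0$ (differentiate $\sigma^{bc}(\sigma^{-1})_{cd}=\delta^b_d$) and $\hat\nabla(\det\sigma)=0$ (Jacobi's formula $\hat\nabla_a\det\sigma=(\det\sigma)(\sigma^{-1})_{bc}\hat\nabla_a\sigma^{bc}$), hence $\hat\nabla g_{ab}=0$. As a projective change preserves vanishing of torsion, $\hat\nabla$ is a torsion-free connection preserving the metric $g$, so it is the Levi-Civita connection of $g$, which proves the ``if'' direction. For the converse, suppose $\nabla$ is projectively equivalent to the Levi-Civita connection $\nabla^g$ of a pseudo-Riemannian metric $g$; then $\nabla^g$ preserves the volume density of $g$, hence is special, and being projectively equivalent to $\nabla$ it lies in $p$. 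Rescaling the inverse metric $g^{bc}$ to weight $-2$ by a positive $\nabla^g$-parallel density produces a non-degenerate $\sigma^{bc}\in\ce^{(bc)}(-2)$ with $\nabla^g\sigma^{bc}=0$; thus $D\sigma=0$ when computed with $\nabla^g$, hence---by projective invariance of $D$---with $\nabla$ as well.

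The weight bookkeeping and the transformation formula for $\nabla\sigma$ are routine. The step carrying the real content is the choice of $\Upsilon$ in the ``if'' direction and the verification that the resulting $\hat\nabla$ simultaneously annihilates $\sigma$, its inverse and its determinant, so that it is forced to be the Levi-Civita connection of $g$; this is exactly where non-degeneracy is used and is the heart of the Mikes--Sinjukov argument, the remainder being formal.
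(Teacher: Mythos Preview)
The paper does not actually prove Theorem~\ref{EM1}; it states the result, attributes it to Mike\v{s} and Sinjukov, and remarks that its presentation follows Eastwood--Matveev~\cite{EM}. So there is no in-paper argument to compare against beyond the identification of the metric as $g^\si$ in the paragraph after the theorem.

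Your proof is correct and is essentially the standard Eastwood--Matveev argument. The key step---choosing the projective change $\Upsilon_a=-\tfrac12(\sigma^{-1})_{ab}\mu^b$ so that the trace part of $\nabla_a\sigma^{bc}$ is absorbed and $\hat\nabla\sigma=0$---is exactly the content the paper is citing, and your weight bookkeeping (in particular the observation that weight $-2$ makes the $\Upsilon_a\sigma^{bc}$ terms cancel in the transformation law) is right. The converse direction is handled correctly via projective invariance of $D$. Your construction of $g_{ab}=(\det\sigma)^{-1}(\sigma^{-1})_{ab}$ agrees with the paper's $g^\si_{ab}$ defined as the inverse of \eqref{invm}, so the Levi--Civita connection you produce is indeed the one the paper refers to immediately after the theorem.
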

\noindent Here $\si$ {\em non-degenerate} means that it is
non-degenerate as a bilinear form on $T^*M (1)$. Our presentation of
the Theorem here follows the treatment \cite{EM} of
Eastwood-Matveev. 

Let us write $\vol_{a_1a_2\cdots a_n}$ for the canonical section of
$\Lambda^nT^*M(n+1)$ which gives the tautological bundle map
$\Lambda^nTM\to \ce(n+1)$.  Observe that each section $\si^{ab}$ in
$\ce^{(ab)}(-2)$ canonically determines a section $\tau^\si\in
\ce(2)$, by taking its determinant using $\vol$:
\begin{equation} \label{taudef}
\si^{ab}\mapsto \tau^\si:=\si^{a_1b_1}\cdots \si^{a_nb_n}\vol_{a_1\cdots a_n}\vol_{b_1\cdots b_n}.
\end{equation}
We may form
\begin{equation} \label{invm}
\tau^\si \si^{ab}
\end{equation}
and in the case that $\si^{ab}$ is non-degenerate taking the inverse
of this yields a metric that we shall denote $g^\si_{ab}$. This
construction is clearly invertible and a metric $g_{ab}$ determines a
non-degenerate section $\si^{ab}\in \ce^{(ab)}(-2)$.  We are
interested in the metric $g^\si$ when $\si $ is a solution to
\nn{metr}. Indeed, the Levi-Civita connection mentioned in the Theorem
is the Levi-Civita connection for $g^\si$.

Now the projectively invariant differential operator $D$ arises from a
very general construction, namely as the first operator in a
Bernstein--Gelfand--Gelfand (BGG) sequence. For the definition and
general construction of these sequences see \cite{CSS,CD}.  For any
first BGG equation there is a special class of solutions known as
normal solutions, see \cite{Leitner}. These have striking properties,
see \cite{CGHjlms,CGH,CGHpoly,Gal}, but in general it is unclear how
restrictive the normality condition is, and in particular how commonly
normal solutions are available. The aim of this article is to analyse
the normality condition for solutions of \nn{metr}. This needs only
very basic facts on BGG sequences and simple elementary
considerations, and furthermore the answer is significant and
appealing. To explain these terms and prepare for that discussion we
need some elements of tractor calculus, an invariant calculus for
projective structures.

\subsection{Acknowledgements} We are grateful for the referee's 
insightful comments. 

\section{projective tractor calculus}
Consider the first jet prolongation
$J^1\ce(1)\to M$ of the density bundle. By definition, its fiber over
$x\in M$ consists of all one--jets $j^1_x\sigma$ of local smooth
sections $\sigma\in\Gamma(\ce(1))$ defined in a neighborhood of
$x$. Here for two sections $\sigma$ and $\tilde\sigma$ we have
$j^1_x\sigma=j^1_x\tilde\sigma$ if and only if in one --- or
equivalently any --- local chart the sections $\sigma$ and
$\tilde\sigma$ have the same Taylor--development in $x$ up to first
order. In particular, mapping $j^1_x\sigma$ to $\sigma(x)$ defines a
surjective bundle map $J^1\ce(1)\to\ce(1)$, called the \textit{jet
  projection}. If $j^1_x\sigma$ lies in the kernel of this projection,
so $\sigma(x)=0$ then the value $\nabla\sigma(x)\in
T^*_xM\otimes\ce_x(1)$ is the same for all linear connections $\nabla$
on the vector bundle $\ce(1)$. This identifies the kernel of the jet
projection with the bundle $T^*M\otimes\ce(1)$. (See for example
\cite{palais} for a general development of jet bundles.)

In an abstract index notation let us write $\ce_A$ for $J^1\ce(1)$ and
$\ce^A$ for the dual vector bundle. Then we can view the jet
projection as a canonical section $X^A$ of the bundle
$\ce^A\otimes\ce(1)=\ce^A(1)$. Likewise, the inclusion of the kernel
of this projection can be viewed as a canonical bundle map
$\ce_a(1)\to\ce_A$, which we denote by $Z_A{}^a$. Thus the jet exact
sequence (at 1-jets) is written in this context as
\begin{equation}\label{euler}
0\to \ce_a(1)\stackrel{Z_A{}^a}{\to} \ce_A \stackrel{X^A}{\to}\ce(1)\to 0.
\end{equation}
We write $\ce_A=\ce(1)\lpl \ce_a(1)$ to summarise the composition
structure in \nn{euler}.  As mentioned, any connection $\nabla \in p$
determines a connection on $\ce(1)$, and this is precisely a splitting
of \nn{euler}.  Thus given such a choice we have the direct sum
decomposition $\ce_A \stackrel{\nabla}{=} \ce(1)\oplus \ce_a(1) $ with
respect to which we define a connection by
\begin{equation}\label{pconn}
\nabla^{\mathcal{T}^*}_a \binom{\si}{\mu_b}
:= \binom{ \nabla_a \si -\mu_a}{\nabla_a \mu_b + P_{ab} \si}.
\end{equation}
Here $P_{ab}$ is the projective Schouten tensor and, with
$R_{ab}{}^c{}_d$ denoting the curvature of $\nabla$, is related to the
Ricci tensor $R_{ab}:=R_{ca}{}^c{}_b$ by
$(n-1)P_{ab}=R_{ab}$.  It turns out that \nn{pconn} is
independent of the choice $\nabla \in p$, and so
$\nabla^{\mathcal{T}^*}$ is determined canonically by the projective
structure $p$. We have followed the construction of \cite{BEG}, but
this {\em cotractor connection} is due to  \cite{Thomas}. It  is
equivalent to the normal Cartan connection (of \cite{Cartan}) for the
Cartan structure of type $(G,P)$, see \cite{CapGoTAMS}. Thus we shall
also term $\ce_A$ the {\em cotractor bundle}, and we note the dual
{\em tractor bundle} $\ce^A$ (or in index free notation $\mathcal{T}$)
has canonically the dual {\em tractor connection}: in terms of a
splitting dual to that above this is given by
\begin{equation}\label{tconn}
\nabla^\cT_a \left( \begin{array}{c} \nu^b\\
\rho
\end{array}\right) =
\left( \begin{array}{c} \nabla_a\nu^b + \rho \delta^b_a\\
\nabla_a \rho - P_{ab}\nu^b
\end{array}\right).
\end{equation}

Now consider $\ce^{( BC)}=S^2\mathcal{T}$. It follows immediately that
this has the composition series
$$
\ce^{(bc)}(-2)\lpl\ce^b(-2)\lpl \ce(-2),
$$
and the normal tractor connection is given on $S^2\mathcal{T}$ by
\begin{equation}\label{s2conn}
\nabla_a^{\cT}\left( \begin{array}{c}
\si^{bc}\\
\mu^b\\
\rho
\end{array}
\right)=
\left( \begin{array}{c}
\nabla_a \si^{bc} + \delta^b_a \mu^c + \delta^c_a \mu^b \\
\nabla_a \mu^b + \delta^b_a\rho - P_{ac}\si^{bc}  \\
\nabla_a \rho - 2 P_{ab}\mu^b
\end{array}
\right).
\end{equation}

\subsection{The Kostant codifferential}
The tractor connection on $S^2\Cal T$, and more generally on a tractor
bundle $\Cal V$, which is formed by tensorial constructions from
$\ce^A$ and $\ce_A$, extends to the covariant exterior derivative on
$\Cal V$--valued forms. Thus one so obtains a twisting of the de Rham
sequence by $\Cal V$, and this is central in the usual construction of
BGG sequences. At the next stage of the construction, a second ingredient is
needed, as follows.

Note that from \nn{euler} it follows that there is a
canonical (projectively invariant) map
\begin{equation}\label{Xf}
\mathbb{X}: T^*M\to \End (\cT) \quad \mbox{given by} \quad u_b
\mapsto X^AZ_B{}^b u_b .
\end{equation}
Since sections of $\End (\cT)$ act on any tractor bundle in the
obvious (tensorial) way, we thus obtain via $\mathbb{X}$ a canonical
action of $T^*M$ on any tractor bundle $\cV$. This induces a sequence
of natural bundle maps
$$
\partial^*:\La^kT^*M\otimes\Cal V\to\La^{k-1}T^*M\otimes\Cal V, \quad k=1,\cdots , n,
$$ on $\cV$-valued differential form bundles, but going in the
opposite direction to the twisted de Rham sequence. This a special
case of a Kostant codifferential and satisfies
$\partial^*\o\partial^*=0$, so it leads to natural subquotient
bundles $H_k(M,\Cal V):=\ker(\partial^*)/\im(\partial^*)$. (The
notation for these bundles is due to the fact that they are induced by
certain Lie algebra homology groups, but this is not relevant for our
purposes.)

In the case of $\Cal V= S^2\Cal T$, which is relevant for our purposes, the end
of this sequence has
the form
$$
\begin{pmatrix} \ce^{(ab)}(-2) \\ \ce^a(-2) \\ \ce(-2)\end{pmatrix}
\overset{\partial^*}\longleftarrow
\begin{pmatrix} \ce^{(bc)}_a(-2) \\ \ce_a^b(-2) \\
  \ce_a(-2)\end{pmatrix}
\overset{\partial^*}\longleftarrow
\begin{pmatrix} \ce^{(cd)}_{[ab]}(-2) \\ \ce^c_{[ab]}(-2) \\
  \ce_{[ab]}(-2)\end{pmatrix} ,
$$ where we have used a vector notation analogous to \eqref{s2conn}.
From the general theory (or indeed the formula \nn{Xf}) it follows
that $\partial^*$ maps each row in some column to the row below in the
next column to the left, and that all the bundle maps are natural. All
we need to now here is the following result.
\begin{lemma}\label{H1lem}
In terms of the composition series $\ce_a^{(bc)}(-2)\lpl\ce_a^b(-2)\lpl
\ce_a(-2)$ for $T^*M\otimes S^2\Cal T$ we have
$$
\im(\partial^*)=(\ce_a^b)_0(-2)\lpl\ce_a(-2)\subset
(\ce_a^{(bc)})_0(-2)\lpl(\ce_a^b)_0(-2)\lpl
\ce_a(-2)=\ker(\partial^*)
$$
\end{lemma}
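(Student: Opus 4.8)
The plan is to compute the action of $\partial^*$ explicitly on the two relevant bundle maps in the displayed sequence, using only the defining formula \nn{Xf} for $\mathbb{X}$, and then read off the image and kernel. First I would recall that, by definition, $\partial^*$ is built out of the algebraic action of $X^AZ_B{}^b$ on the tractor factors, contracted against the form indices. On $S^2\cT$ with its composition series $\ce^{(bc)}(-2)\lpl\ce^b(-2)\lpl\ce(-2)$, the endomorphism $X^AZ_B{}^b$ kills the top slot, sends the middle slot to the top slot, and sends the bottom slot to the middle slot (this is immediate from \nn{euler}: $X^A$ projects onto $\ce(1)$ and $Z_B{}^b$ includes $\ce_b(1)$ into $\ce_B$). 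Symmetrizing appropriately for $S^2\cT$ and inserting the $\La^kT^*M$ indices with the correct combinatorial and sign factors, I would write down the two maps
$$
\begin{pmatrix} \ce^{(cd)}_{[ab]}(-2) \\ \ce^c_{[ab]}(-2) \\ \ce_{[ab]}(-2)\end{pmatrix}
\overset{\partial^*}\longrightarrow
\begin{pmatrix} \ce^{(bc)}_a(-2) \\ \ce_a^b(-2) \\ \ce_a(-2)\end{pmatrix}
\overset{\partial^*}\longrightarrow
\begin{pmatrix} \ce^{(ab)}(-2) \\ \ce^a(-2) \\ \ce(-2)\end{pmatrix}
$$
in coordinates. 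The second $\partial^*$ is essentially "take the slot above and contract the new lower index into the form index", so it is surjective onto the whole target and its kernel consists exactly of those triples $(\psi_a{}^{bc},\chi_a{}^b,\omega_a)$ whose contraction $a$ with the first tensor index vanishes in the appropriate slots — that is, $\psi$ is trace-free in the sense indicated and $\chi$ is trace-free; this identifies $\ker(\partial^*)$ with $(\ce_a^{(bc)})_0(-2)\lpl(\ce_a^b)_0(-2)\lpl\ce_a(-2)$, matching the claim.

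Next I would compute the composition/image from the left: applying the first $\partial^*$ to a general element of $\La^2T^*M\otimes S^2\cT$ and skew-symmetrizing, the output lands in the middle and bottom slots only (the top slot of the image is hit by the top slot of the domain under $X^AZ_B{}^b$, but the relevant component of $X^AZ_B{}^b$ acting on the $\ce^{(cd)}$-slot produces something valued in $\ce^{c}$, i.e. the middle slot, so nothing reaches the $\ce^{(bc)}_a$ slot). A short index computation shows the image is precisely the set of $(\chi_a{}^b,\omega_a)$ with $\chi$ trace-free, i.e. $(\ce_a^b)_0(-2)\lpl\ce_a(-2)$. The trace-free condition on the middle slot arises because the skew-symmetrization in the two lower indices, combined with the single contraction coming from $\mathbb{X}$, forces the trace part to cancel — this is the one genuinely non-formal point and I would verify it by a direct contraction. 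Combining the two computations gives $\im(\partial^*)\subset\ker(\partial^*)$ with the stated identification.

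The main obstacle I anticipate is purely bookkeeping: getting the combinatorial factors, the placement of symmetrizations on the $S^2\cT$ indices versus skew-symmetrizations on the form indices, and the signs all consistent, so that "the trace part" is unambiguously identified and one sees cleanly that the middle slot of the image is exactly the trace-free part and not, say, all of $\ce_a^b(-2)$ or only a further subbundle. An alternative, possibly cleaner, route is representation-theoretic: $H_1(M,S^2\cT)$ is known from Kostant's theorem to be the irreducible bundle $(\ce_a^{(bc)})_0(-2)$ (this is exactly the domain of the first BGG operator $D$), so $\im(\partial^*)$ must be the unique subbundle of $\ker(\partial^*)$ with the complementary composition factors $(\ce_a^b)_0(-2)$ and $\ce_a(-2)$; but since the paper wishes to rely only on "very basic facts on BGG sequences", I would present the explicit $\partial^*$-computation as the primary argument and mention the Kostant-theoretic identification only as a consistency check.
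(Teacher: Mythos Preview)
Your approach is correct, but it inverts the paper's strategy. You propose explicit index computation of $\partial^*$ from the formula \nn{Xf} as the primary argument, with the Kostant--homology identification offered only as a consistency check. The paper does the opposite: it first observes (as you do) that by its very form $\partial^*$ can only induce multiples of the natural trace maps between successive slots, but then, rather than computing those multiples directly, it invokes the general facts (citing \cite{BCEG}) that $H_0\cong\ce^{(ab)}(-2)$ and $H_1\cong(\ce_a^{(bc)})_0(-2)$, which force all the relevant multiples to be nonzero and hence pin down $\ker(\partial^*)$ and $\im(\partial^*)$ without any further calculation. Your direct route is more self-contained and avoids the external appeal to Lie algebra homology, at the cost of the bookkeeping you anticipate (and the verification that the middle-slot image is exactly trace-free, which you correctly flag as the one substantive point); the paper's route is shorter but leans on imported representation theory. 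One slip: your claim that the rightmost $\partial^*$ is ``surjective onto the whole target'' is false, since nothing reaches the top slot $\ce^{(ab)}(-2)$; this does not, however, affect your identification of the kernel.
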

\begin{proof}
  From what we know about $\partial^*$, we see that
  $\partial^*:T^*M\otimes S^2\Cal T\to S^2\Cal T$ can only induce some
  multiples of the trace maps $\ce_a^b(-2)\to\ce(-2)$ and
  $\ce_a^{(bc)}(-2)\to\ce^c(-2)$ on the two upper slots. Likewise, in
  the next step, there can only be multiples of the canonical trace
  maps applied to the two upper slots.

  Now there are some simple general facts about the homology of the
  $\partial^*$--sequence, see e.g.~\cite{BCEG}. These imply that the
  homology in degree zero coincides with the irreducible quotient
  bundle $\ce^{(ab)}(-2)$ and the homology in degree one is
  $(\ce_a^{(bc)})_0(-2)$. This implies that all the bundle maps from
  above are actually non--zero multiples of the trace maps, and hence
  the claim.
\end{proof}

\section{BGG sequences and normal solutions}\label{BGG}

Let us write $\Pi:\ce^{(BC)}\to \ce^{(bc)}(-2)$ for the canonical
projectively invariant map onto the quotient; explicitly this is given
by $H^{BC}\mapsto Z_B{}^bZ_C{}^cH^{BC}$. The key step to the
construction of BGG sequences is the construction of a differential
splitting to the tensorial operator on sections induced by this
projection. Phrased for the case of $S^2\Cal T$, this reads as

\begin{proposition}\label{nprop}
  For a smooth section $\si$ of $\ce^{(bc)}(-2)$ there is a unique
  smooth section $L(\si)$ of $\ce^{(AB)}$ such that $\Pi(L(\si))=\si$
  and $\partial^*(\nabla L(\si))=0$. This defines a projectively
  invariant differential operator $L$, and $D(\si)$ is given by
  projecting $\nabla (L(\si))$ to the quotient bundle
  $\ker(\partial^*)/\im(\partial)^*\cong (\ce_a^{(bc)})_0(-2)$.
\end{proposition}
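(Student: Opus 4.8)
The statement is the standard first-BGG splitting operator result, specialized to $S^2\mathcal{T}$, so the natural approach is the standard one: build $L(\sigma)$ by correcting successive slots. Write a candidate lift in the vector notation of \eqref{s2conn} as $\binom{\sigma^{bc}}{\mu^b}{\rho}$ with the top slot forced to be $\sigma^{bc}$ (so that $\Pi(L(\sigma))=\sigma$ automatically), and determine $\mu^b$ and $\rho$ from the single equation $\partial^*(\nabla L(\sigma))=0$. Using the formula \eqref{s2conn} for $\nabla_a$ on $S^2\mathcal{T}$, compute $\nabla_a L(\sigma)$ slot by slot; then apply $\partial^*$, which by Lemma~\ref{H1lem} (and the discussion of the $\partial^*$-sequence preceding it) is a non-zero multiple of the canonical trace map on the two upper slots of $T^*M\otimes S^2\mathcal{T}$. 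The equation $\partial^*(\nabla L(\sigma))=0$ thus becomes two tensorial (algebraic, not differential) equations: the trace of the middle slot of $\nabla_a L(\sigma)$ vanishes, and the trace of the top slot of $\nabla_a L(\sigma)$ vanishes.

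**Solving for the slots.** The first of these equations involves $\nabla_a\sigma^{bc}+\delta^b_a\mu^c+\delta^c_a\mu^b$; taking the appropriate trace and solving the resulting linear algebraic relation pins down $\mu^b$ uniquely as $\mu^b=-\tfrac1{n+1}\nabla_a\sigma^{ab}$ (up to the universal normalization constant, which is non-zero precisely by Lemma~\ref{H1lem}). With $\mu^b$ now fixed, the second equation involves $\nabla_a\mu^b+\delta^b_a\rho-P_{ac}\sigma^{bc}$; tracing and solving the linear relation pins down $\rho$ uniquely in terms of $\nabla_a\mu^b$ and $P_{ab}\sigma^{ab}$. This establishes existence and uniqueness of $L(\sigma)$, and since every operation used (the connection $\nabla\in p$, the tensorial map $\partial^*$, solving universal linear systems) is projectively invariant, $L$ is a projectively invariant differential operator. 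Finally, since $\nabla L(\sigma)\in\ker(\partial^*)$ by construction, it has a well-defined class in $\ker(\partial^*)/\operatorname{im}(\partial^*)$, which by Lemma~\ref{H1lem} is $(\mathcal{E}_a^{(bc)})_0(-2)$; one checks this class is exactly $D(\sigma)=\operatorname{trace-free}(\nabla_a\sigma^{bc})$ by reading off the top slot of $\nabla_a L(\sigma)$ modulo the image of $\partial^*$, which by Lemma~\ref{H1lem} consists of the lower two slots.

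**Main obstacle.** The only genuine content beyond bookkeeping is verifying that the two linear algebraic systems one must solve at each stage are actually invertible — i.e.\ that the relevant trace maps appearing in $\partial^*$ have non-zero coefficients. This is exactly what Lemma~\ref{H1lem} guarantees (it identifies $\operatorname{im}(\partial^*)$ as the lower two slots $(\mathcal{E}_a^b)_0(-2)\lpl\mathcal{E}_a(-2)$, forcing the relevant $\partial^*$-components to be non-zero multiples of traces), so with that lemma in hand the proposition is essentially a formal consequence. A secondary, more mechanical obstacle is keeping the trace conventions and weight counts straight across the three slots of $S^2\mathcal{T}$, since the codifferential shifts both form-degree and the internal grading; but this is routine once the diagram before Lemma~\ref{H1lem} is used to track where each $\partial^*$-component lands.
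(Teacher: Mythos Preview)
Your approach matches the paper's: write a lift $(\sigma^{bc},\mu^b,\rho)$, impose that the two upper slots of \eqref{s2conn} be trace-free (equivalently $\partial^*(\nabla L(\sigma))=0$), and solve to obtain $\mu^b=-\tfrac1{n+1}\nabla_i\sigma^{ib}$ and $\rho=-\tfrac1n(\nabla_i\mu^i-P_{ij}\sigma^{ij})$, then read off $D(\sigma)$ as the top slot modulo $\im(\partial^*)$. One wording slip: the connection ``$\nabla\in p$'' is \emph{not} projectively invariant---what makes your uniqueness argument for invariance go through is that the tractor connection $\nabla^{\mathcal T}$, the projection $\Pi$, and $\partial^*$ are invariant, so the characterizing conditions are; the paper instead simply says invariance ``can be verified by a direct computation.''
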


In the special case needed here, this can also be proved by a direct
computation. Indeed, given $\si=\si^{ab}$ we can add components
$\mu^b$ and $\rho$ and then use that the two top slots of
\eqref{s2conn} have to be tracefree to deduce that
$\mu^b=-\tfrac{1}{n+1}\nabla_i\si^{ib}$ and
$$
\rho=-\tfrac{1}{n}(\nabla_i\mu^i-\Rho_{ij}\si^{ij})=
\tfrac{1}{n(n+1)}(\nabla_i\nabla_j+(n+1)\Rho_{ij})\si^{ij}.
$$
This proves the first claim and then projective invariance of $L$ can
be verified by a direct computation. It is also evident then, that
projecting to $\ker(\partial^*)/\im(\partial^*)$, one exactly obtains
the tracefree part of the top slot, which equals $D(\si)$.

In particular, we see that $\si$ is a solution of $D$ if and only if
$\nabla L(\si)$ is actually a section of the subbundle
$\im(\partial^*)\subset\ker(\partial^*)$. This suggests how the
 subclass of normal solutions is defined.

\begin{definition}\label{ndef}
  A solution $\si$ of the metricity equation $D(\si)=0$ is said to be
  {\em normal} if $\nabla L(\si)=0$.
\end{definition}

Observe that for a parallel section $s$ of $S^2\Cal T$, Proposition
\ref{nprop} implies that $s=L(\Pi(s))$, so normal solutions of the
equation \nn{metr} are in bijective correspondence with parallel
sections of the tractor bundle $S^2\Cal T$, so this gives a connection
to the holonomy of the tractor connection.

The question then is, in the case that $\si$ is non-degenerate, what
normality implies for the metric structure $g^\si$. The answer in this
case is elegant and important. Here if $n=2$ we take {\em Einstein} to mean
constant Gaussian curvature.
\begin{theorem}\label{mt}
  A non-degenerate solution $\si$ of the metricity equation \nn{metr}
  is normal if and only if the corresponding metric $g^\si$ is an
  Einstein metric.
\end{theorem}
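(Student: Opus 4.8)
The plan is to carry out the entire computation in a single distinguished scale, the Levi--Civita connection $\nabla=\nabla^g$ of the metric $g:=g^\si$. This connection lies in $p$: it preserves the Riemannian volume form of $g$ and is therefore special. Moreover, in this scale the solution $\si$ is parallel, $\nabla_a\si^{bc}=0$; this is essentially the content of Theorem~\ref{EM1} and the remarks following it, but it can also be seen on the spot. Indeed, in this scale $\si^{bc}=\phi\,g^{bc}$ for some weight $-2$ density $\phi$, so $D\si=0$ says that $(\nabla_a\phi)\,g^{bc}$ is pure trace in $\ce_a{}^{(bc)}$, and a one--form times $g^{bc}$ is never of that form once $n\geq 2$ (its trace over $a,c$ and its trace over $b,c$ are incompatible unless $\nabla_a\phi=0$), forcing $\nabla_a\phi=0$.

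Granting $\nabla_a\si^{bc}=0$, the explicit formulae for the splitting operator $L$ from Proposition~\ref{nprop} collapse to $\mu^b=-\tfrac1{n+1}\nabla_i\si^{ib}=0$ and $\rho=\tfrac1{n}P_{ij}\si^{ij}$, so that $L(\si)=(\si^{bc},0,\rho)$. Substituting this into the tractor connection \eqref{s2conn}, and using that $\si$ is a solution so that the top slot (which is $D\si$) vanishes, yields
\begin{equation*}
\nabla^{\cT}_a L(\si)=\begin{pmatrix} 0 \\ \delta^b_a\rho-P_{ac}\si^{bc} \\ \nabla_a\rho \end{pmatrix}.
\end{equation*}
Hence, by Definition~\ref{ndef}, $\si$ is normal precisely when both remaining slots vanish, i.e.\ when $\delta^b_a\rho=P_{ac}\si^{bc}$ and $\nabla_a\rho=0$.

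It remains to read these two conditions off in terms of $g$. With $\si^{bc}=\phi\,g^{bc}$ and $\phi$ parallel one has $\rho=\tfrac{\phi}{n}\,g^{ij}P_{ij}$, so the middle equation becomes $\tfrac1n(g^{ij}P_{ij})\,\delta^b_a=P_a{}^b$ (indices raised with $g$), i.e.\ $P_{ab}$ is a function multiple of $g_{ab}$, equivalently $\Ric^g_{ab}=(n-1)P_{ab}$ is proportional to $g_{ab}$; and the bottom equation becomes $\nabla_a(g^{ij}P_{ij})=0$, i.e.\ the scalar curvature of $g$ is constant. For $n\geq 3$ the first condition already forces the second via the contracted second Bianchi identity, so the two together are exactly the Einstein condition for $g$; for $n=2$ the first condition is automatic and the second is precisely constancy of the Gaussian curvature, which is the meaning adopted for Einstein in that dimension. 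Both implications of the theorem follow.

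The computation is routine once the scale is fixed; the two points that need a little care are the weight bookkeeping in passing to the scale of $g^\si$ together with the fact that $\si$ is parallel there (this is where Theorem~\ref{EM1} enters), and the Schur-type argument ruling out non-constant scalar curvature when $n\geq 3$. Neither is a genuine obstacle, so the main content of the result is really the clean translation, via the tractor connection \eqref{s2conn} and Definition~\ref{ndef}, of ``normal'' into ``Einstein''.
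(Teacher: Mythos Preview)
Your proof is correct and follows essentially the same route as the paper's: compute $L(\si)$ and $\nabla^{\cT}L(\si)$ in the Levi--Civita scale of $g^\si$, where $\nabla\si=0$, and read off that the middle slot vanishes iff $P_{ab}\propto g_{ab}$ and the bottom slot iff the scalar curvature is constant. The only minor points are expository: the phrase ``its trace over $a,c$ and its trace over $b,c$'' is imprecise (one of these needs the metric, since the two natural $\delta$--traces coincide by symmetry), and the sentence ``it preserves the Riemannian volume form of $g$ and is therefore special'' explains why $\nabla^{g}$ is special but not why it lies in $p$---that is indeed the content of Theorem~\ref{EM1}, as you note immediately afterward.
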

\begin{proof}
  Assume that $\si$ is a non--degenerate solution of the metricity
  equation and $g^\si$ is the corresponding metric. From above we have
  the formula for computing $L(\si)$. By projective invariance there
  is no loss if we calculate in the scale $\tau^\si$, meaning we use
  the Levi-Civita connection $\nabla$ of $g^\si$. It follows from the
  discussion in Section \ref{intro} that this has the congenial
  consequence
$$
\nabla \si=0 .
$$
Moreover, the corresponding tensor $\Rho_{ab}$ is a non--zero multiple
of the Ricci--tensor of $g^\si$. Now we get $\mu^b=0$ and
$\rho=\tfrac{1}{n}\Rho_{ij}\si^{ij}$, so the latter is just a multiple
of the scalar curvature. Hence from \eqref{s2conn} we see that $\nabla
L(\si)$ has zero in the top slot, the trace-free part of
$\Rho_{ai}\si^{bi}$ in the middle slot and
$\tfrac{1}{n}\nabla_a(\Rho_{ij}\si^{ij})$ in the bottom slot.

By the non--degeneracy of $\si$, $\nabla L(\si)=0$ implies that
$\Rho_{ab}$ must be some multiple of $g^\si_{ab}$, which for $n\geq 3$
is precisely the Einstein condition. Conversely, assuming this and
$n\geq 3$, the scalar curvature is constant whence $\nabla L(\si)=0$.
On the other hand if $n=2$ the result follows immediately
from the described form of the bottom slot.
\end{proof}

From the formula for $L(\si)$ in the proof, we also see that for any
non--degenerate solution $\si$ of the metricity equation, the bilinear
form on each fiber of $\ce_A$ induced by the section $L(\si)$ of
$\ce^{(AB)}$ is non--degenerate if and only if the scalar curvature is
non--zero in that point. In particular, if $\si$ is a non-degenerate
normal solution, then $L(\si)$ is non--degenerate in this sense if and
only if $g^\si$ is not Ricci flat.

Our results give the perspective that the so--called Beltrami theorem,
i.e.~the characterization of projectively flat metrics, is a special
case of the link with Einstein structure described above.
\begin{corollary}\label{Beltrami}
  Let $M$ be a smooth manifold of dimension $n\geq 2$ and let $g$ be a
  pseudo--Riemannian metric on $M$ such that the projective structure
  determined by $g$ is locally projectively flat. Then $g$ has
  constant sectional curvature.
\end{corollary}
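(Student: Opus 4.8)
The plan is to deduce this directly from Theorem \ref{mt}. Starting from the metric $g$, let $\sigma=\sigma^g$ be the non-degenerate solution of the metricity equation \nn{metr} given by the (invertible) correspondence of Section \ref{intro}, so that $g^\sigma=g$ and the projective structure in question is the projective class of the Levi-Civita connection $\nabla$ of $g$. By Theorem \ref{mt} it then suffices to show that $\sigma$ is a \emph{normal} solution: for $n=2$, normality already gives via that theorem that $g=g^\sigma$ has constant Gaussian curvature, which is what ``Einstein'' was declared to mean; and for $n\geq 3$ I will upgrade ``Einstein'' to ``constant sectional curvature'' by a one-line curvature identity.

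The one non-elementary input is a standard feature of BGG sequences: when the tractor connection is flat, the solutions of the first BGG operator are, locally, exactly the images under $\Pi$ of parallel sections of the tractor bundle. Now local projective flatness of the projective structure of $g$ means precisely that the tractor connection on $\Cal T$, hence the induced connection on $S^2\Cal T$, is flat. So, working on a neighbourhood of an arbitrary point (which is harmless, constant sectional curvature being a pointwise condition), there is a parallel section $s$ of $S^2\Cal T$ with $\Pi(s)=\sigma$; but then, by the uniqueness in Proposition \ref{nprop}, $s=L(\Pi(s))=L(\sigma)$ --- exactly the observation recorded right after Definition \ref{ndef} --- so that $\nabla L(\sigma)=\nabla s=0$ and $\sigma$ is normal. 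I expect this flat-case statement --- concretely, that the prolongation connection governing the metricity equation \nn{metr} reduces to $\nabla^\cT$ once the tractor curvature vanishes, so that solutions correspond to parallel tractors --- to be the only point that needs care; everything else is bookkeeping.

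Finally, for $n\geq 3$ I would argue as follows. By the proof of Theorem \ref{mt}, normality of $\sigma$, i.e.\ $\nabla L(\sigma)=0$ read in the scale of $g=g^\sigma$, forces the projective Schouten tensor to be $\Rho_{ab}=\la\, g_{ab}$ for a constant $\la$ (the middle slot of $\nabla L(\sigma)$ forces proportionality to $g$ using non-degeneracy of $\sigma$, and the bottom slot forces the proportionality factor to be constant). On the other hand, local projective flatness is the vanishing of the projective Weyl curvature, which for the metric connection $\nabla$ reads $R_{ab}{}^c{}_d=\delta^c_a\Rho_{bd}-\delta^c_b\Rho_{ad}$. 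Substituting $\Rho_{ab}=\la g_{ab}$ gives $R_{ab}{}^c{}_d=\la(\delta^c_a g_{bd}-\delta^c_b g_{ad})$, equivalently $R_{abcd}=\la(g_{ac}g_{bd}-g_{bc}g_{ad})$, so $g$ has constant sectional curvature $\la$. Together with the $n=2$ case handled above, this proves the corollary.
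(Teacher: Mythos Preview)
Your proof is correct and follows essentially the same route as the paper: obtain the solution $\sigma$ of \nn{metr} from $g$, invoke the BGG fact that on flat structures every solution is normal, apply Theorem \ref{mt} to get Einstein, and combine with $W=0$ (and constancy of $\la$) to reach constant sectional curvature. The only differences are cosmetic: you spell out the normality argument via Proposition \ref{nprop} and the curvature identity $R_{ab}{}^c{}_d=\la(\delta^c_a g_{bd}-\delta^c_b g_{ad})$ explicitly, whereas the paper cites the normality fact and leaves the final curvature step as a remark.
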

\begin{proof}
  Tautologically, $g$ determines a solution of the metricity equation
  for the projective structure determined by $g$. But it is well known
  that on locally flat structures, any solution of a first
  BGG--operator is normal (see e.g.\ \cite[(3) of Lemma 2.7]{CSS}). Hence by Theorem
  \ref{mt}, $g$ is Einstein and so has constant scalar
  curvature. Together with the vanishing of the projective Weyl
  curvature and Cotton tensor, implied by projective flatness, this
  shows that $g$ has constant sectional curvature.
\end{proof}
\noindent Note that we include the above well known result primarily
to illustrate that Theorem \ref{EM1} may be viewed as generalisation
of this. The proof of Corollary \ref{Beltrami} here may be viewed as simply a
repackaging of that given in \cite[Corollary 5.6]{EM}.

\section{Relations to other known results}

\subsection{A prolongation connection for the metricity equation}
A crucial point about the proof of Theorem \ref{mt}, that we have
given above, is that apart from general facts on first BGG operators
it only needs very simple computations. Thus the argument has the
scope to generalise easily to significantly more complicated cases. In
the special case of the metricity equation (which has been very well
studied) one may also deduce Theorem \ref{mt} directly from results of
\cite{EM}, as we now discuss. (However, the detailed analysis of the
metricity equation done in that reference would be much more
complicated to generalize.)

Since the operator $D$ is a linear partial differential operator of
finite type, one knows in general that it can be equivalently written
in first order closed form. Based on ideas on BGG sequences, this has
been done for a large class of cases (including $D$) in
\cite{BCEG}. There it is shown that solutions of these equations are
in one-to-one correspondence with parallel sections for some linear
connection on an auxiliary bundle.  In general such connections are not
unique and it is difficult to prolong invariant equations by invariant
connections. In the special case of the metricity equation, this is
precisely what has been done in \cite{EM}: the authors construct a
projectively invariant connection on $S^2\Cal T$ whose parallel
sections are in bijective correspondence with solutions of the
metricity equation.
\begin{theorem}\cite{EM} \label{EMthm}
The solutions to \nn{metr} are in one-to-one correspondence with solutions
of the following system:
\begin{equation}\label{psys}
\nabla_a\left( \begin{array}{c}
\si^{bc}\\
\mu^b\\
\rho
\end{array}
\right) + \frac{1}{n}
\left( \begin{array}{c}
0 \\
W_{ac}{}^b{}_d\si^{cd}  \\
- 2 Y_{abc}\si^{bc}
\end{array}
\right) =0.
\end{equation}
\end{theorem}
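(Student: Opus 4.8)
The plan is to produce the modified connection of \eqref{psys} directly from the normal tractor connection \eqref{s2conn}, by adding a suitable bundle map (an element of $\Omega^1(M)\otimes\End(S^2\Cal T)$) that kills exactly the obstruction to a BGG solution being normal. Recall from Proposition \ref{nprop} and the explicit formulas following it that a section $\si$ of $\ce^{(bc)}(-2)$ solves \eqref{metr} precisely when $\nabla L(\si)$ lands in $\im(\partial^*)$, which by Lemma \ref{H1lem} is the subbundle $(\ce^b_a)_0(-2)\lpl\ce_a(-2)$. The content of the theorem is that this obstruction, read off from \eqref{s2conn} applied to the prolonged triple $(\si^{bc},\mu^b,\rho)$ with $\mu^b=-\tfrac1{n+1}\nabla_i\si^{ib}$ and $\rho$ as given, can be expressed through the projective Weyl tensor $W_{ac}{}^b{}_d$ and the projective Cotton tensor $Y_{abc}$, and hence can be absorbed into a correction term of the stated form.

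Concretely, the first step is to compute $\nabla^{\cT}_a L(\si)$ using \eqref{s2conn} for a general section $\si$ of $\ce^{(bc)}(-2)$, \emph{not} assuming $D(\si)=0$. The top slot gives, by construction of $L$, the trace-free part $D(\si)$ plus its trace (which vanishes identically after the prolongation choice of $\mu^b$), so the top slot of $\nabla^{\cT}L(\si)$ is essentially $D(\si)$ placed in $(\ce_a^{(bc)})_0(-2)$. The second step is to rewrite the middle and bottom slots: substituting the formulas for $\mu^b$ and $\rho$ and commuting covariant derivatives introduces the curvature $R_{ab}{}^c{}_d$, which one decomposes into its trace-free part $W_{ac}{}^b{}_d$ and the Schouten tensor $P_{ab}$ (using $(n-1)P_{ab}=R_{ab}$), while the derivative of $P_{ab}$ produces the Cotton tensor $Y_{abc}=\nabla_a P_{bc}-\nabla_b P_{ac}$. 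After this bookkeeping the middle slot becomes (a multiple of) $W_{ac}{}^b{}_d\si^{cd}$ and the bottom slot (a multiple of) $Y_{abc}\si^{bc}$, \emph{modulo terms proportional to $D(\si)$ and its derivatives}; matching normalizations fixes the constant $\tfrac1n$ and the factor $-2$. The third step is the formal conclusion: define $\widetilde\nabla_a = \nabla^{\cT}_a + \Psi_a$ where $\Psi_a$ is the $\End(S^2\Cal T)$-valued one-form implementing $(\si^{bc},\mu^b,\rho)\mapsto \tfrac1n(0,\,W_{ac}{}^b{}_d\si^{cd},\,-2Y_{abc}\si^{bc})$; then $\widetilde\nabla L(\si)=0$ is equivalent, slot by slot, to $D(\si)=0$, and conversely $\widetilde\nabla s=0$ forces $s=L(\Pi(s))$ (since the top slot of $\widetilde\nabla s$ still agrees with that of $\nabla^{\cT}s$, which controls the prolongation), giving the claimed bijection with solutions of \eqref{metr}.

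The main obstacle is the second step: the honest identification of the lower slots of $\nabla^{\cT}L(\si)$ with the Weyl and Cotton expressions. This requires carefully commuting the two covariant derivatives hidden in $\rho$ past each other, keeping track of all curvature terms acting on the density-weighted symmetric $2$-tensor $\si^{ij}$, and then verifying that every term which is \emph{not} of Weyl/Cotton type is itself a tensorial combination of $D(\si)$ (i.e.\ lies in the span of the top-slot output and its trace contractions), so that it vanishes automatically on solutions and, more importantly, does not spoil the equivalence for the prolongation. This is precisely the computation that \cite{EM} carried out by their more hands-on method; here one has the structural shortcut that Lemma \ref{H1lem} already tells us \emph{which} slots the obstruction can occupy, so one only needs to pin down the coefficients rather than discover the shape of the correction. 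An alternative, essentially equivalent route is to invoke Theorem \ref{mt} together with Definition \ref{ndef}: a non-degenerate solution is normal iff $\nabla^{\cT}L(\si)=0$, and the general BGG prolongation result of \cite{BCEG} guarantees the \emph{existence} of some first-order closed form; one then only needs to check that the specific curvature correction above is projectively invariant (a short direct check, since $W$ and $Y$ are the standard projective invariants) and reproduces \eqref{psys}.
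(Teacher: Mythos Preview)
The paper does not actually prove this theorem: it is stated with the attribution \cite{EM} and no proof environment follows; the surrounding discussion only shows how to \emph{recover} Theorem~\ref{mt} from \eqref{psys}, not how to establish \eqref{psys} itself. So there is no in-paper argument to compare your proposal against.

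That said, your outline is structurally sound. The correction $\Psi_a$ you introduce lands in $\im(\partial^*)$ (the Weyl contribution is trace-free on the relevant index pair, and the Cotton contribution sits in the bottom slot), so the top slot of $\widetilde\nabla$ coincides with that of $\nabla^{\cT}$; this is exactly what lets you read off $\mu^b$ and then $\rho$ by successive traces and conclude that any $\widetilde\nabla$-parallel section equals $L$ of its projection. The genuine content, as you yourself flag, is your ``second step'': the explicit commutator computation showing that the middle and bottom slots of $\nabla^{\cT}L(\si)$ are, modulo terms in $D(\si)$, exactly $-\tfrac1n W_{ac}{}^b{}_d\si^{cd}$ and $\tfrac2n Y_{abc}\si^{bc}$. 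You do not carry this out but defer to \cite{EM} (or to \cite{BCEG} plus a coefficient match), which is precisely what the paper itself does. Your ``alternative route'' at the end does not avoid the work either: invoking \cite{BCEG} yields existence of \emph{some} prolongation connection, not this specific one, and ``checking that it reproduces \eqref{psys}'' is again the same curvature computation you have deferred.
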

Here
\noindent \begin{equation}\label{Cotton}
Y_{abc}:= \nabla_a P_{bc}-\nabla_b P_{ac}
\end{equation} is the {\em projective Cotton tensor}, and \
$W_{ac}{}^b{}_d$ is the {\em projective Weyl tensor} (i.e. the
completely trace-free part of the full curvature). There are some sign
differences compared to \cite{EM}, as in that source the authors have
used a splitting of the tractor bundle different to that in
\cite{BEG}.

Of course, one expects to be able to recover Theorem \ref{mt}. from
\nn{psys}. Indeed this is so.  This claim amounts to showing that the
second tractor term in the display vanishes if and only if
$g^{ab}_\si$ is (the inverse of) an Einstein metric.
But it is straightforward to show that, calculating
in the scale $\nabla^g$, we have
$$
W_{ac}{}^b{}_d g^{cd} = \frac{n}{n-1}\cdot \Phi_{ac}g^{bc},
$$ 
where $\Phi_{ab}$ is the trace-free part of the Ricci tensor of
$\nabla^g$. Thus when $n\geq 3$ the term $W_{ac}{}^b{}_d g^{cd}$
certainly vanishes if and only if $g$ is Einstein.

For the cases $n\geq 3$ it remains only to verify that
$$
Y_{abc}g^{bc} =0
$$ if $g$ is Einstein. But then $g$ Einstein implies that
$P_{ab}=\lambda g_{ab}$, with $\lambda$ constant. Thus from
\nn{Cotton} it follows at once that $Y_{abc}=0$. Finally for
projective manifolds of dimension $n=2$ the tensor $W_{ac}{}^b{}_d$ is
identically zero, so we get no information at that stage. On the other
hand in this dimension (and since we calculate in the scale
$\nabla^g$) $P_{ab}$ is a multiple of the Gauss
curvature $K$ times the metric, thus $Y_{abc}=0$ if and only if $K$ is
constant.

\begin{remark}\label{KMart}
  It should also be mentioned that there are also links with the work
  \cite{KM} of Kiosak-Matveev. In that source, the authors consider
  the implications of having two distinct Levi-Civita connections in a
  projective class, at least one of which is Einstein. In this setting
  a specialisation of the system given in \nn{s2conn} (see equations
  (8), (24), and (32) in \cite{KM}) is used to prove a number of
  interesting results, including that if one of two projectively
  related Levi-Civita connections is Einstein, then so is the other,
  which was originally shown in \cite{Mikes2}. As pointed out by the
  referee, this implies that if one solution of \nn{metr} is normal
  then so are all others. In \cite{KM} it is also shown (see Theorem
  2) that in dimension 4 if two such solutions are linearly
  independent, then the structure is projectively flat.  These results
  should be visible using the tools developed here and we shall take
  that up elsewhere.
\end{remark}

\subsection{Projective holonomy}
The parallel section of $S^2\Cal T$ determined by a normal solution of
the metricity equation can be interpreted as a reduction of projective
holonomy, i.e.~the holonomy of the standard tractor connection. In the
case that this parallel section is pointwise non--degenerate (see also
the next subsection) this falls into the cases studied in the
insightful work \cite{ArmstrongP1} of S.~Armstrong. In this reference
it is shown (without discussing the related BGG equations) that, on a
set of generic points, this yields an Einstein metric whose
Levi--Civita connection lies in the projective class.

\subsection{Another first BGG equation and Klein-Einstein structures}
There is a projectively invariant differential operator
$$
K: \ce(2) \to \ce_{(abc)}(2)
$$ with leading term $\nabla_{(a}\nabla_b\nabla_{c)}$, for any
$\nabla\in p$. This is another first BGG operator, but note that it
looks very different to the (first order) metricity operator
$D$. Nevertheless normal solutions satisfying suitable non-degeneracy
conditions are again equivalent to Einstein metrics, cf.\ Theorem
\ref{mt}.  This is proved in \cite[Section 3.3]{CGHjlms} where, among
other things, the normal solutions are used to define projective
compactifications of certain Einstein manifolds; one case, that we
term a Klein-Einstein structure, is both a curved generalisation of
the Klein model of hyperbolic space and a projectively compact
analogue of a Poincar\'e-Einstein manifold. The latter is a
conformally compact negative Einstein manifold.

This interesting link is easily explained from our current
perspective. The parallel tractor $H$ arising in connection with
normal solutions to the equation $K \tau =0$ is a section of $S^2
\mathcal{T}^*$, and thus, if non-degenerate, it is equivalent to a
unique parallel section of $S^2 \mathcal{T}$, namely $H^{-1}$.  Let us
say that any solution $\si$ of the metricity equation is {\em
  algebraically generic} if the corresponding section $L(\si)\in
\Gamma(S^2 \mathcal{T})$ is everywhere non-degenerate (for normal
solutions on connected manifolds this is equivalent to non-degenerate
at one point).  From the explicit description of the splitting
operator $L$, as given in Section \ref{BGG}, we have that on the locus
where $\si$ itself is non-degenerate this non-degeneracy condition
is equivalent to the  non-vanishing of the scalar curvature of $g^\si$.
Similarly we shall say solutions of the $K \tau =0$ equation are
algebraically generic if the corresponding section of $S^2
\mathcal{T}^*$ is everywhere non-degenerate.

Now using this terminology, combined with machinery developed in
\cite{CGHjlms}, the observations above are easily rephrased as
statement in terms of the affine connections and related structures in
the projective class. Doing this we arrive at the following result,
which in part generalises Theorem \ref{mt}.
\begin{theorem} \label{mttoK}
Normal algebraically generic solutions $\si$ of the metricity equation
are naturally in one-to-one correspondence with normal algebraically
generic solutions to the equation $K \tau =0$.  This is via the
non-linear map $\si \mapsto \tau =H_{AB}X^AX^B$, where $H_{AB}$ is the
section of $S^2\mathcal{T}^*$ inverse to $(L(\si))$.  For the inverse
map: given a normal algebraically generic solution $\tau$ to $K\tau
=0$ then, on the open set where $\tau$ is not zero, the corresponding
solution of the metricity equation is $(\tau P^\tau_{ab})^{-1}$. Here
$P^\tau_{ab}$ is the Schouten tensor for the connection
$\nabla^\tau\in p$ preserving $\tau$.
 \end{theorem}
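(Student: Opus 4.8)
The plan is to reduce the statement to the holonomy picture for first BGG operators and then match up the explicit splitting formulas. First I would invoke, for both operators, the correspondence between normal solutions and parallel tractors: for the metricity equation this is Proposition~\ref{nprop}, which gives $\si\leftrightarrow L(\si)\in\Gamma(S^2\cT)$ with $L(\si)$ parallel and $\si=\Pi(L(\si))$; for $K\tau=0$ the analogous statement, with the splitting operator $L_K$ landing in $S^2\cT^*$ and $\tau$ recovered from $L_K(\tau)$ by the canonical projection $H_{AB}\mapsto H_{AB}X^AX^B$, is established in \cite[Section~3.3]{CGHjlms}. By the definition of \emph{algebraically generic} recalled above, $\si$ (resp.\ $\tau$) is algebraically generic exactly when the associated parallel tractor is fibrewise non-degenerate as a bilinear form on $\ce_A$ (resp.\ on $\ce^A$).

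Next I would observe that inverting bilinear forms is a bijection between the everywhere non-degenerate parallel sections of $S^2\cT$ and those of $S^2\cT^*$: if $S\in\Gamma(S^2\cT)$ is parallel and fibrewise non-degenerate then $S^{-1}\in\Gamma(S^2\cT^*)$ is smooth, and it is parallel because the tractor connection preserves the canonical pairing of $\cT$ with $\cT^*$ (equivalently, parallel transport intertwines $S$ with $S^{-1}$); the same argument runs in the other direction. Composing this involution with the two BGG correspondences yields the asserted bijection: from $\si$ one forms $H^{AB}:=L(\si)$, then $H_{AB}:=(H^{AB})^{-1}$, and then $\tau:=H_{AB}X^AX^B$; since $H_{AB}$ is parallel and non-degenerate, $L_K(\tau)=H_{AB}$ and $\tau$ is a normal algebraically generic solution of $K\tau=0$. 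The construction is manifestly invertible, so only the non-vanishing claim and the explicit form of the inverse map remain.

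For these I would compute in suitable scales. If $\si$ is normal and algebraically generic on the connected manifold $M$, then by Theorem~\ref{mt} and the remark after its proof $g^\si$ is Einstein with nowhere vanishing scalar curvature; evaluating $L(\si)$ in the scale $\tau^\si$ exactly as in the proof of Theorem~\ref{mt} (top slot $\si^{bc}$, middle slot $0$, bottom slot a nonzero multiple of the scalar curvature) shows that $H_{AB}X^AX^B$ is a nowhere zero density of weight $2$, so $\tau$ never vanishes in this direction (whereas for a general parallel $H_{AB}$ the density $H_{AB}X^AX^B$ may have zeros, which is why the inverse map is only described on $\{\tau\neq 0\}$). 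For the inverse map, given a normal algebraically generic $\tau$, on the open set $\{\tau\neq 0\}$ I would work in the scale $\nabla^\tau\in p$ determined by $\tau$; there $\nabla^\tau\tau=0$, and --- just as for $L(\si)$, where $\mu^b$ is a multiple of $\nabla_i\si^{ib}$ and $\rho$ a multiple of $(\nabla_i\nabla_j+(n+1)P_{ij})\si^{ij}$ --- the explicit formula for $L_K$ from \cite[Section~3.3]{CGHjlms} produces $L_K(\tau)$ with top slot $\tau$, middle slot (a multiple of $\nabla^\tau_a\tau$, hence) $0$, and bottom slot a multiple of $\tau\,P^\tau_{ab}$. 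Thus in this scale $H_{AB}$ has no cross term between its two $\cT^*$-slots, with the two diagonal blocks $\tau$ and (a constant times) $\tau\,P^\tau_{ab}$; its inverse $H^{AB}$ then has diagonal blocks $\tau^{-1}$ and (the reciprocal constant times) $(\tau\,P^\tau_{ab})^{-1}$, and $\Pi(H^{AB})=Z_B{}^bZ_C{}^cH^{BC}$ extracts precisely the latter block, giving $(\tau\,P^\tau_{ab})^{-1}$ once the constant is $1$.

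The main obstacle is this last step: one must pin down the precise normalisation of the splitting operator $L_K$ in the scale $\nabla^\tau$ so that the bottom slot of $L_K(\tau)$ equals $\tau\,P^\tau_{ab}$ on the nose, and then carry out the block inversion and the projection $\Pi$ carefully enough to land on $(\tau\,P^\tau_{ab})^{-1}$ rather than on an unidentified multiple of it. This is pure bookkeeping with the BGG formulas for the $K$-operator, but it is where all the constants live; everything else is a formal consequence of the two first-BGG correspondences together with the fact that the inverse of a parallel non-degenerate bilinear tractor is again parallel.
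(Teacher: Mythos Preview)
Your approach is the same as the paper's: reduce both sides to parallel sections of $S^2\cT$ and $S^2\cT^*$ via the respective BGG splitting operators, observe that fibrewise inversion of non-degenerate bilinear forms is a bijection between these (preserving parallelism), and then read off the explicit formulae in convenient scales. Your write-up is in fact more detailed than the paper's, which presents the theorem as an immediate repackaging of the preceding discussion together with the machinery of \cite{CGHjlms}.

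One claim is wrong, though it does not affect your main argument. You assert that when one starts from a normal algebraically generic $\si$ the resulting $\tau=H_{AB}X^AX^B$ is nowhere zero, and that the restriction to $\{\tau\neq 0\}$ in the inverse formula is only needed for ``general'' parallel $H_{AB}$ not arising this way. But the correspondence is a bijection, so every admissible $\tau$ arises from some $\si$; your computation in the scale $\tau^\si$ is only valid on the open set where $\si$ is non-degenerate, since that scale requires $\tau^\si\neq 0$. In fact the paper notes just after the theorem that $\tau$ agrees with $\tau^\si$, and the subsequent Corollary records that the zero locus of $\tau$ is exactly the degeneracy locus of $\si$ (a possibly non-empty embedded hypersurface). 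The restriction to $\{\tau\neq 0\}$ is there simply because the explicit formula $(\tau P^\tau_{ab})^{-1}$ needs the scale $\nabla^\tau$, not because of any asymmetry between the two directions of the bijection.
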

\noindent Note that $\tau$ as given above in Theorem \ref{mttoK}
agrees with $\tau^\si$, as defined in \nn{taudef}. This is easily
seen to be true, up to a constant factor, as they
have a common zero locus, and where non-vanishing are both parallel for
the Einstein Levi-Civita connection.

This Theorem means that we can at once import the results from
\cite{CGHjlms} for normal solutions to the $K$ equation and apply
these to  generic normal  solutions of the metricity equation. In
particular we have the following statement available.
\begin{corollary} If $\si$ is an algebraically generic normal solution of the
metricity equation then $\si$ is non-degenerate (and determines a metric
$g^\si$) on an open dense subset of $M$. The set where $\si$ is
degenerate is the zero locus of $\tau^\si$, and if non-empty is a
smoothly embedded hypersurface (not necessarily connected) with a
canonically induced non-degenerate conformal structure.  This
hypersurface is separating and the signature of $\si$ changes as the
hypersurface is crossed.
\end{corollary}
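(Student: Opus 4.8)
The plan is to pass $\si$ through Theorem~\ref{mttoK} and then import the structure theory for normal solutions of the $K$--equation from \cite{CGHjlms}. Since $\si$ is algebraically generic and normal, $L(\si)\in\Gamma(S^2\cT)$ is parallel and everywhere non--degenerate, so its fibrewise inverse $H_{AB}\in\Gamma(S^2\cT^*)$ is parallel and everywhere non--degenerate too; by Theorem~\ref{mttoK} and the remark following it, $\tau:=H_{AB}X^AX^B$ is a normal algebraically generic solution of $K\tau=0$ and $\tau=c\,\tau^\si$ for a nonzero constant $c$. The first step is then purely algebraic: by \nn{taudef}, $\tau^\si$ is a fixed nonzero multiple of the determinant of $\si^{ab}$ computed with $\vol$, so $\tau^\si(x)=0$ precisely when $\si^{ab}$ is degenerate at $x$. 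Hence the degeneracy set of $\si$ is the zero locus $Z:=\{\tau^\si=0\}=\{\tau=0\}$, and everything reduces to analysing $Z$ and the behaviour of $\si$ across it.

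The substantive point is that $d\tau$ is nowhere zero along $Z$, and here I would argue as in \cite{CGHjlms}. In any scale $\nabla\in p$ the BGG splitting writes $H_{AB}$, with respect to the composition series $\ce(2)\lpl\ce_a(2)\lpl\ce_{(ab)}(2)$ of $\ce_{(AB)}$, with top slot $\tau$, middle slot a nonzero multiple of $\nabla_a\tau$, and some bottom slot $\psi_{ab}$. At $x\in Z$ the top slot vanishes; if in addition $d\tau(x)=0$, the middle slot vanishes too, so $H_{AB}(x)=Z_A{}^cZ_B{}^d\psi_{cd}(x)$, and since $X^AZ_A{}^c=0$ this forces the nowhere--zero section $X^A(x)$ into the kernel of $H_{AB}(x)$, contradicting non--degeneracy. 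Therefore $d\tau\neq0$ on $Z$, so $Z$ is a smoothly embedded hypersurface and in particular nowhere dense; $\si$ is thus non--degenerate on the open dense set $M\setminus Z$, and $\tau$ is a local defining function for $Z$ with nonvanishing gradient. Since $\ce(2)=\ce(1)^{\otimes2}$ carries a canonical orientation, the sign of $\tau^\si$ is globally defined and $M\setminus Z=\{\tau^\si>0\}\sqcup\{\tau^\si<0\}$, so $Z$ separates $M$; and as $\tau^\si$ changes sign across $Z$ while $d\tau^\si\neq0$ forces $\si$ to have corank exactly one along $Z$ (its adjugate being nonzero there), exactly one eigenvalue of $\si$ passes through zero and the signature changes. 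The induced conformal structure on $Z$ comes from the observation that along $Z$ the null line $L\subset\cT$ spanned by $X$ has $H$--orthogonal complement $L^\perp$ of dimension $n$ with $L^\perp/L\cong TZ$ up to a density weight, on which $H$ descends to a non--degenerate bilinear form; the density weight makes it a conformal class rather than a single metric. All of this is already carried out for the $K$--equation in \cite{CGHjlms} and can simply be transcribed.

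The main obstacle is not in these steps --- all of which are elementary --- but in the bookkeeping: one must check that ``normal'' and ``algebraically generic'' correspond under the bijection of Theorem~\ref{mttoK}, which holds because that bijection is just inversion of parallel non--degenerate tractors, and that the density $\tau=H_{AB}X^AX^B$ produced there is exactly the one whose zero set is analysed in \cite{CGHjlms}. Once this dictionary is fixed the corollary is an immediate transcription of the results there; the only genuinely self--contained ingredients are the determinant identity \nn{taudef}, which pins down the degeneracy set, and the elementary remark that a simple zero of the determinant of $\si$ has corank one.
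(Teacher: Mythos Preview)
Your proposal is correct and follows exactly the paper's approach: pass through Theorem~\ref{mttoK} to convert $\si$ into a normal algebraically generic solution $\tau$ of $K\tau=0$, then import the structure theory from \cite{CGHjlms}. In fact you have fleshed out the imported argument (the non--vanishing of $d\tau$ along $Z$ via non--degeneracy of $H$, the corank--one observation, and the conformal structure on $Z$) in more detail than the paper itself, which simply defers to \cite[Theorem~3.2]{CGHjlms}.
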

\noindent We have not attempted to be complete here; further results
are available by translating in an obvious way the results from
\cite[Theorem 3.2]{CGHjlms}.

\begin{remark}
  The bundle $\Lambda^{n+1}\mathcal{T}^*$ is parallelisable by the
  projective tractor connection. Let us select a ``tractor volume
  form'' $\eta$. That is $\eta\in \Gamma(\Lambda^{n+1}\mathcal{T}^*)$,
  $\eta$ is non-trivial and $\nabla \eta=0$.  Given any section $Q$ of
  $S^2\mathcal{T}$ we may take its determinant using $\eta$; let us
  denote this $\det (Q)$.  For a section of $\si^{ab}\in
  \Gamma(\ce^{(ab)}(-2))$ the condition that $\si$ is algebraically
  generic is exactly to say that $\det (L(\si))$ is nowhere zero.  On
  the other hand if $\si$ is a solution of the metricity equation
  then, on the locus where $\si$ is non-degenerate, $\det (L(\si))$
  agrees with the scalar curvature of $g^\si$ up to a non-zero
  constant; let us assume $\eta$ is chosen so that this constant 1.
  Thus, for solutions of \nn{metr}, $\det (L(\si))$ is a natural
  extension of the scalar curvature of $g^\si$ to a
  quantity that is defined everywhere on $M$, even though $g^\si$ may
  not be available globally.  In particular for normal solutions $\det
  (L(\si))$ is a constant.  On the other hand the system consisting of
  \nn{metr} plus $\det (L(\si))= \mbox{\it{constant}}$ is a weakening
  of the normality condition, and it provides a projective analogue of
  the conformal almost scalar constant equation \cite{Gal} from
  conformal geometry.
\end{remark}

\begin{remark}\label{aKM}
Interestingly there is a further link with article \cite{KM}
(cf.\ Remark \ref{KMart}). The equation $K\tau=0$ in Theorem
\ref{mttoK} may be specialised to the case that the background
connection $\nabla$ is an Einstein Levi-Civita connection. The result
is the symmetric part of the equation (34) of \cite{KM}. (The other
part of (34) is then a differential consequence of this part and the
Einstein condition.)
\end{remark}

\end{document}